\newtheorem{theorem}{Theorem}[section]
\newtheorem{proposition}[theorem]{Proposition}
\newtheorem{corollary}[theorem]{Corollary}
\theoremstyle{definition}
\numberwithin{equation}{section}
\begin{document}
\title[Toricness of Binomial Edge Ideals]{Toricness of Binomial Edge Ideals}
\author[  Mahdis Saeedi, Farhad Rahmati, Seyyede Masoome Seyyedi]{  Mahdis Saeedi, Farhad Rahmati, Seyyede Masoome Seyyedi}

\address{ Faculty of  Mathematics and Computer
Science, Amirkabir University of Technology, P. O. Box 15875-4413 ,
Tehran, Iran.} \email{mahdis.saeedi@aut.ac.ir}

\address{ Faculty of  Mathematics and Computer
Science, Amirkabir University of Technology, P. O. Box 15875-4413 ,
Tehran, Iran.}

\email{frahmati@aut.ac.ir}

\address{ Faculty of  Mathematics and Computer
Science, Amirkabir University of Technology, P. O. Box 15875-4413 ,
Tehran, Iran.}

\email{mseyyedi@aut.ac.ir}

 \subjclass[2000]{13F20, 13C05 , 13C14.}

\keywords{lattice ideal ,toric ideal,binomial edge ideal.}

\begin{abstract}
Let $G$ be a finite simple graph. In this paper we will show that the binomial edge ideal of $G$, ‎$ J‎_{G}‎ $‎ is toric if and only if each connected component of ‎$ G $‎  is complete and in this case it is the sum of toric ideal associated to bipartite complete graphs.

\end{abstract} \maketitle
\section{\textbf{Introduction}}
Let ‎$ G $ be a simple graph on the vertex set ‎$ [n]=‎\left\lbrace ‎1,\cdots,n ‎\right\rbrace ‎‎ $‎‎ and ‎$ K $ a field. Herzog, Hibi and Hreinsdotir [6] and Ohtani independently in [7] introduced binomial edge ideal ‎$ J‎_{G}‎ $ in ‎$ R=K[x‎_{1},\cdots ,x‎_{n}‎,y‎_{1},\cdots,y‎_{n}‎‎‎‎ ] $‎‎‎ attached to ‎$ G $ and studied their algebraic properties. They proved that ‎‎$ J‎_{G}‎ $ is a radical ideal and answered the question of when  ‎$ J‎_{G}‎ $ is a prime ideal.

%By proposition []%
The following conditions are equivalent [7]
%\begin{itemize}
%\item

i) ‎$ G $ is complete around all vertices of‎ $ G $.‎‎
%\item 

ii) Each connected component of ‎$ G $ is a complete graph.
% \item

iii) ‎$ J‎_{G}‎ $ is a prime ideal.
% \end{itemize}

 In this paper we answer to the question of when ‎ ‎$ J‎_{G}‎ $  is a lattice ideal and we will show that [theorem 3.2]  ‎$ J‎_{G}‎ $ is a lattice ideal iff ‎$ J‎_{G}= ‎\left\langle  I‎_{G‎_{1}‎}, \cdots,I‎_{G‎_{r}‎}‎\right\rangle ‎‎‎‎‎ $‎ where ‎$ I‎_{G‎_{i}‎}‎ $‎ is the toric ideal associated to a complete bipartite graph ‎$ K‎_{2,n}‎ $ equivalently  ‎$ J‎_{G}‎ $ is a prime ideal. ‎

 \section{\textbf{Preliminaries}}

Let $G$ be a simple graph and
$S=K[x_{1},\cdots,x_{n},y_{1},\cdots,y_{n}]$. set $f_{ij} =
x_{i}y_{j} -  x_{j}y_{i}.$
The binomial edge ideal $J_{G} \subset S$ of $G$ is the
ideal generated by the binomials $f_{ij} = x_{i}y_{j} -  x_{j}y_{i}$
such that $\{i, j\}$ is an edge of $G$.

We can see easily by definitions that:
\begin{proposition} $~~~~~~~~~~~~~~~~~~~~~~~$ 

~~~~~~~~~~~~~ a)Suppose that $G$ has an isolated vertex $1$, and $G'$ is the
restriction of $G$ to the vertex set $[n]\setminus
\{1\}$,$S'=K[x_{1},\cdots,x_{n-1},y_{1},\cdots,y_{n-1}]$ and
$J_{G'}$ is an ideal in $S'$, then
 $J_{G}=‎\langle‎ J_{G'}‎\rangle‎$.

b)If $G‎_{1}‎,G‎_{2}‎$ are two graphs on the vertex set $[n]$ then
$J_{G‎_{1}‎}=J_{G‎_{2}‎}$ if and only if $G‎_{1}‎=G‎_{2}‎$.

c)If $G‎_{1}‎,G‎_{2}‎$ are two graphs on the vertex set $[n]$ then $$J_{G‎_{1}‎\cup
G‎_{2}‎}= J_{G‎_{1}‎}+J_{G‎_{2}‎}.$$

d)If $G'$ is complement of $G$ with respect to $K_{n}$ then
$$J_{G}+J_{G'}= J_{K_{n}}.$$

e)Let ‎$ G‎_{1}, \cdots,G‎_{r}‎‎ $be the connected component of ‎$ G $‎ ‎then ‎$ ‎ J‎_{G}=‎\left\langle J‎_{G‎_{1}‎}, \cdots, J‎_{G‎_{r}‎}‎\right\rangle‎.‎‎‎ $‎‎

\end{proposition}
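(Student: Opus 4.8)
The plan is to reduce every assertion to a single structural fact: by definition $J_G$ is the ideal of $S$ generated by the finite set $\mathcal{F}(G)=\{f_{ij}:\{i,j\}\in E(G)\}$, where $E(G)\subseteq\binom{[n]}{2}$ is the edge set. Since ideal generation is monotone and additive with respect to unions of generating sets, parts (a), (c), (d), (e) become immediate consequences of how edge sets behave under the relevant graph operations, and the only assertion requiring a genuine argument is the injectivity in (b).

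First, for (a): an isolated vertex is incident to no edge, so the edge-binomials generating $J_G$ are exactly those generating $J_{G'}$, viewed under the evident inclusion $S'\hookrightarrow S$; the equality $J_G=\langle J_{G'}\rangle$ for the extended ideal is then purely formal. Next, for (c): since $E(G_1\cup G_2)=E(G_1)\cup E(G_2)$, we have $\mathcal{F}(G_1\cup G_2)=\mathcal{F}(G_1)\cup\mathcal{F}(G_2)$, and the ideal generated by a union of two sets is the sum of the ideals they generate, giving $J_{G_1\cup G_2}=J_{G_1}+J_{G_2}$. Part (d) then follows because the complement satisfies $E(G)\sqcup E(G')=E(K_n)$, so $G\cup G'=K_n$ and (c) yields $J_G+J_{G'}=J_{K_n}$. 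Part (e) is the same observation iterated: the edge set of $G$ is the disjoint union of the edge sets of its connected components, so applying (c) $r-1$ times gives $J_G=J_{G_1}+\cdots+J_{G_r}=\langle J_{G_1},\dots,J_{G_r}\rangle$.

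The hard part will be the forward (injectivity) direction of (b): from $J_{G_1}=J_{G_2}$ deduce $E(G_1)=E(G_2)$ (the reverse implication being trivial). My plan is to pass to the degree-$2$ graded component. Since each $f_{ij}$ is homogeneous of degree $2$ and already sits in the lowest occupied degree of the ideal, no $S$-combination of the $f_{ij}$ with nonconstant coefficients can land back in degree $2$ except the $K$-linear ones; hence $(J_G)_2=\operatorname{span}_K\mathcal{F}(G)$. I would then observe that the generators are $K$-linearly independent: for distinct unordered pairs $\{i,j\}\neq\{k,l\}$ the monomial supports $\{x_iy_j,x_jy_i\}$ and $\{x_ky_l,x_ly_k\}$ are disjoint, because $x_ay_b=x_cy_d$ forces $a=c$ and $b=d$. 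Consequently $\mathcal{F}(G)$ is a basis of $(J_G)_2$ made of binomials with pairwise disjoint support, so each edge $\{i,j\}$ can be recovered as the unique pair whose monomials $x_iy_j,\,x_jy_i$ appear in that graded piece.

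Equal ideals have equal degree-$2$ pieces, so $(J_{G_1})_2=(J_{G_2})_2$ forces the two bases to coincide as subsets of a fixed monomial-indexed space, whence $\mathcal{F}(G_1)=\mathcal{F}(G_2)$ and therefore $E(G_1)=E(G_2)$, i.e. $G_1=G_2$. I expect the only subtlety to be the clean justification that $(J_G)_2=\operatorname{span}_K\mathcal{F}(G)$, which follows at once from the grading; everything else is bookkeeping on edge sets.
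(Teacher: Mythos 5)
Your proof is correct. Note that the paper itself offers no argument at all for this proposition --- it is prefaced only by ``We can see easily by definitions that:'' --- so there is no written proof to compare against; your write-up supplies exactly the details the authors left implicit. Parts (a), (c), (d), (e) are indeed the pure bookkeeping on edge sets that the authors had in mind (generation by $\mathcal{F}(G)$ plus additivity of ideal generation over unions of generating sets), and your treatment of them is fine. The real content is your part (b), which is the one assertion that is not literally ``by definition'': you correctly isolate the degree-$2$ graded piece, observe that $(J_G)_2=\operatorname{span}_K\mathcal{F}(G)$ because the generators are homogeneous of degree $2$, and then use the fact that distinct edges give binomials with pairwise disjoint monomial supports (since $x_ay_b=x_cy_d$ forces $a=c$, $b=d$) to recover $E(G)$ from $(J_G)_2$. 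One small point worth stating explicitly to close the argument: given $f_{ij}\in(J_{G_2})_2$, write it as a $K$-combination of the $f_{kl}$ with $\{k,l\}\in E(G_2)$; since $x_iy_j$ lies in the support of $f_{kl}$ only when $\{k,l\}=\{i,j\}$, the edge $\{i,j\}$ must itself belong to $E(G_2)$, giving $E(G_1)\subseteq E(G_2)$ and, by symmetry, equality. This is implicit in your phrase about bases coinciding ``as subsets of a fixed monomial-indexed space,'' but two distinct bases can in general span the same space, so the disjoint-support recovery step is the precise reason the bases must agree here; it deserves the extra sentence.
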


Let
$E(G)=\{e_{1},\cdots,e_{q}\}$ and 
$f_{e}=x_{i}x_{j}$  where  $e=\{i,j\}\in E(G)$ and $F=\{
f_{e_{1}},\cdots,f_{e_{q}}\}$. 
 Consider the  graded homomorphism of K$-$algebras:
$$ \varphi : B=K[t_{1},\cdots,t_{q}]\longrightarrow
K[F],$$ induced by $\varphi(t_{k})=f_{e_{k}}$ where ‎$ K[F] $‎ is the sub algebra of ‎$ K[X] $‎ generated by ‎$‎\left\lbrace ‎ f_{e_{1}},\cdots,f_{e_{q}}‎\right\rbrace‎. $‎
The kernel of $\varphi$,$I_{G}$, is the toric ideal of $K[F]$
with respect to $f_{e_{1}},\cdots,f_{e_{q}}$, named the toric ideal
associated to $G$.

Let $M$  be the ‎$ n‎\times‎ q‎‎ $‎ adjacent matrix of ‎$ G $. Then
$$ I_{G}= \langle \{ t^{\alpha_{
+}}-t^{\alpha_{-}}| \alpha \in \emph{Z}^{q} , M\alpha = 0\}\rangle =
\langle t_{1}-(x_{i}x_{j})_{1} , \cdots ,
t_{q}-(x_{i}x_{j})_{q}\rangle \cap B
$$

This ideal  ‎$ I‎_{G}‎ $ when $G$ ‎ is a bipartite graph can be characterized by combinatorics properties of ‎$ G $‎. see[3]

Let $W=\{v_{0},v_{1},\cdots , v_{r}=v_{0}\}$ be an even cycle  such that $f_{i}=x_{i-1}x_{i}$. As $$f_{1}f_{3}\cdots
f_{r-1}=f_{2}f_{4}\cdots f_{r}$$ the binomial
$$T_{W}=T_{1}T_{3}\cdots T_{r-1}-T_{2}T_{4}\cdots T_{r}$$
is in $I_{G}$. One says that $T_{W}$ is the binomial associated to
$W$.

Let $G$ be a bipartite graph then ‎$ I‎_{G}‎ $ is generated by ‎$ T‎_{W}‎ $ where ‎$ W $‎ is an even cycle. ‎In particular, if $G$ is bipartite complete graph ‎$ K‎_{2,n}‎ $‎  with $S=\{s_{1},s_{2}\}$,$T=\{t_{1},\cdots,t_{n}\}$ we have 
$$\varphi :
S=K[x_{1},\cdots,x_{n},y_{1},\cdots,y_{n}]\longrightarrow K[F]$$
induced by $\varphi(x_{i})=s_{1}t_{i} ~~~~~~~~~~~~ ,~~~~~~~~~~~~~~~~
\varphi(y_{i})=s_{2}t_{i}
 ~~~~~~~~~~~~~~~ ,~~~~~~~~~~  1\leq i \leq n$,
and  we have $I_{K_{2,n}}=\langle
x_{i}y_{j}-x_{j}y_{i}$,$1\leq i,j \leq n , i\neq j\rangle$.
 So for every connected simple graph ‎$ G $‎ on ‎$ [n] $‎,‎$ J‎_{G} ‎\subset‎ I‎_{K‎_{2,n}‎} ‎‎‎‎ $‎ and ‎$ J‎_{K‎_{n}‎}=I‎_{K‎_{2,n}‎} $‎

By a lattice we consider a finitely generated subgroup of $Z^{n}$. A partial character
$\rho$ is a homomorphism, noted by $\rho$   too, from a sub lattice  $ L_{\rho}$ to the
multiplicative group $K^{\ast} = K -\{0\}$.

 For a
partial character $\rho$ we define ‎$ I(\rho) $‎   the  Laurent binomial ideal in $K[X^{\pm}]=K[x_{1},\cdots,x_{n},x_{1}^{-1}\cdots,x_{n}^{-1}]$, generated by 
$$I(\rho)=\langle x^{m}-\rho(m) : m\in L_{\rho}\rangle$$

We let $m_{+} , m_{-} \in \emph{Z}_{+}^{n}$ denote the positive part
and negative part of a vector $m\in \emph{Z}^{n}.$  For a partial
character $\rho$, we define the ideal
$$I_{+}(\rho)=\langle x^{m_{+}}-\rho(m)x^{m_{-}} : m\in
L_{\rho}\rangle in ~~~~~~R=K[x_{1},\cdots,x_{n}]$$ Note that
$I_{+}(\rho)=I(\rho) \cap R$ and so $x^{m}-\rho(m)\in L_{\rho} $ if
and only if $x^{m_{+}}-\rho(m)x^{m_{-}} \in I_{+}(\rho).$ 

We call an ideal $I$ in $R$ a lattice ideal if there exist a partial character $\rho$ such that $I= I_{+}(\rho)$

\begin{proposition} (Eisenbud and Sturmfels [2])
If $I$ is a binomial ideal in $R=K[x_{1},\cdots,x_{n}]$ not
containing any monomial, then there is a unique partial character
$\rho$  such that $$(I : \langle x_{1},\cdots,x_{n}\rangle
^{\infty}) = I_{+}(\rho)$$
\end{proposition}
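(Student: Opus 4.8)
The plan is to move the whole problem into the Laurent polynomial ring $K[X^{\pm}]=K[x_{1},\cdots,x_{n},x_{1}^{-1},\cdots,x_{n}^{-1}]$, where inverting the variables collapses the monomial bookkeeping, and then to contract back to $R$. First I would record that saturating at the variables amounts to inverting them, so that
$$(I : \langle x_{1},\cdots,x_{n}\rangle^{\infty}) = IK[X^{\pm}]\cap R.$$
The hypothesis that $I$ contains no monomial is exactly what I would spend to guarantee that $IK[X^{\pm}]$ is a \emph{proper} ideal: if $1\in IK[X^{\pm}]$, then clearing denominators by a suitable monomial $x^{\alpha}$ would yield $x^{\alpha}\in I$, a monomial in $I$, a contradiction.

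Since $I$ is generated by binomials, so is $IK[X^{\pm}]$; and in the Laurent ring every genuine binomial $x^{a}-cx^{b}$ with $c\in K^{\ast}$ equals the unit $x^{b}$ times $x^{a-b}-c$, so $IK[X^{\pm}]$ is generated by elements of the pure shape $x^{m}-c$. I would then set
$$L=\{\,m\in Z^{n} : x^{m}-c\in IK[X^{\pm}]\ \text{for some}\ c\in K^{\ast}\,\}$$
and define $\rho(m)=c$ for $m\in L$. The decisive step is well-definedness: if $x^{m}-c$ and $x^{m}-c'$ both lie in $IK[X^{\pm}]$, subtracting gives $c'-c\in IK[X^{\pm}]$, and properness forces $c=c'$. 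Using the identities
$$x^{m'}(x^{m}-\rho(m))+\rho(m)(x^{m'}-\rho(m'))=x^{m+m'}-\rho(m)\rho(m')$$
and $-c^{-1}x^{-m}(x^{m}-c)=x^{-m}-c^{-1}$, one reads off that $L$ is a subgroup of $Z^{n}$ and that $\rho:L\to K^{\ast}$ is a homomorphism, i.e. a partial character.

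It then remains to show $IK[X^{\pm}]=I(\rho)$. The inclusion $I(\rho)\subseteq IK[X^{\pm}]$ is immediate from the definitions of $L$ and $\rho$. For the reverse I would take any binomial generator of $I$; it cannot be a monomial, since that would contradict properness, so after reduction it has the form $x^{m_{i}}-c_{i}$ with $c_{i}\in K^{\ast}$, forcing $m_{i}\in L$ and $c_{i}=\rho(m_{i})$, so the generator lies in $I(\rho)$. Contracting to $R$ and invoking the identity $I_{+}(\rho)=I(\rho)\cap R$ recorded above then yields
$$(I : \langle x_{1},\cdots,x_{n}\rangle^{\infty})=IK[X^{\pm}]\cap R=I(\rho)\cap R=I_{+}(\rho).$$
For uniqueness I would run the extraction backwards: if $I_{+}(\rho)=I_{+}(\rho')$, extending to $K[X^{\pm}]$ gives $I(\rho)=I(\rho')$, and the same well-definedness argument forces $L_{\rho}=L_{\rho'}$ and $\rho=\rho'$.

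The main obstacle I anticipate is not a single computation but keeping the well-definedness of $\rho$ airtight: everything hinges on $IK[X^{\pm}]$ being proper, which is precisely where the no-monomial hypothesis is used, and on checking that reducing an arbitrary binomial generator to pure form never silently produces a monomial. A subtler point, which in the full Eisenbud--Sturmfels treatment needs a Gr\"obner-basis argument but is sidestepped here by working in $K[X^{\pm}]$, is that the saturation stays a binomial (indeed lattice) ideal; rather than re-prove binomiality inside $R$, I would lean on the already-stated fact $I_{+}(\rho)=I(\rho)\cap R$ to complete the identification.
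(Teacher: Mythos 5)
The paper never proves this proposition at all --- it is quoted as a known result of Eisenbud and Sturmfels [2] --- so your reconstruction has to stand on its own, and its existence half essentially does: passing to $K[X^{\pm}]$, using the no-monomial hypothesis to get properness of $IK^{\pm}:=IK[X^{\pm}]$, extracting $L$ and $\rho$, checking $IK^{\pm}=I(\rho)$ on generators, and contracting via $I_{+}(\rho)=I(\rho)\cap R$ is exactly the route of [2], and your verifications (well-definedness of $\rho$, the subgroup and homomorphism identities) are correct. One thing you should make explicit rather than implicit: your opening identity $(I : \langle x_{1},\cdots,x_{n}\rangle^{\infty}) = IK[X^{\pm}]\cap R$ is valid only when the saturation is read as $(I : (x_{1}\cdots x_{n})^{\infty})$, i.e. with respect to the product of the variables, which is what [2] means. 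Under the literal reading (colon with powers of the ideal generated by the variables) the proposition is false: for $I=\langle x^{2}-xy\rangle\subset K[x,y]$, a binomial ideal containing no monomial, one has $(I:\langle x,y\rangle^{\infty})=I$, and $I$ is not of the form $I_{+}(\rho)$ since lattice ideals are saturated with respect to each variable while $(I:x)\ni x-y\notin I$. So you have silently (and correctly) repaired the paper's notation; this deserves a sentence.

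The genuine gap is in uniqueness. Your claim that ``the same well-definedness argument forces $L_{\rho}=L_{\rho'}$'' does not follow. Well-definedness (properness) only says: for a fixed proper ideal $J$ and a fixed exponent $m$, at most one $c\in K^{\ast}$ satisfies $x^{m}-c\in J$. Applied to $J=I(\rho)=I(\rho')$, this shows that $\rho$ and $\rho'$ are both restrictions of the ``maximal'' character $\rho_{J}$ defined on $L_{J}=\{m : x^{m}-c\in J \text{ for some } c\in K^{\ast}\}$; it says nothing about whether the sublattices $L_{\rho}$ and $L_{\rho'}$ of $L_{J}$ coincide. What is missing is the implication: if $x^{m}-c\in I(\rho')$ with $c\in K^{\ast}$, then $m\in L_{\rho'}$. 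A priori a strictly smaller lattice could generate the same Laurent ideal, and ruling this out is the real content of Theorem 2.1 in [2]. The standard fix is short but is a separate idea: grade $K[X^{\pm}]$ by the group $Z^{n}/L_{\rho'}$, giving $x^{u}$ the degree $u+L_{\rho'}$; every generator $x^{m'}-\rho'(m')$ of $I(\rho')$ is then homogeneous of degree $\bar{0}$, so $I(\rho')$ is a homogeneous ideal, and hence if $x^{m}-c\in I(\rho')$ with $m\notin L_{\rho'}$, its two homogeneous components $x^{m}$ and $-c$ lie separately in $I(\rho')$; but then the unit $c$ lies in $I(\rho')=IK[X^{\pm}]$, contradicting the properness you already established. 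With that lemma in hand, $L_{J}\subseteq L_{\rho'}$ and $L_{J}\subseteq L_{\rho}$, so both lattices equal $L_{J}$ and the characters agree, completing uniqueness exactly as you intended.
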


\begin{corollary}
If $I$ is a binomial ideal in $K[X]$  not
containing any monomial and $\rho$ is a partial
character then $(I : \langle x_{1},\cdots,x_{n}\rangle ^{\infty}) =
I(\rho)$ if and only if $I=I_{+}(\rho).$
\end{corollary}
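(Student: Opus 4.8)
The plan is to deduce the statement from Proposition 2.3 together with two structural identities. The first is the localization formula $(I : \langle x_1,\ldots,x_n\rangle^{\infty}) = I\,K[X^{\pm}] \cap R$, valid for every ideal of $R$, since saturating by the product of the variables is the same as contracting the extension to the Laurent ring. The second is the pair of relations $I_{+}(\rho) = I(\rho)\cap R$ (already recorded in the text) together with its converse $I(\rho) = I_{+}(\rho)\,K[X^{\pm}]$; the latter holds because each Laurent generator $x^{m}-\rho(m)$ equals, up to the unit $x^{-m_{-}}$, the polynomial generator $x^{m_{+}}-\rho(m)x^{m_{-}}$. Together these say that contraction to $R$ and extension to $K[X^{\pm}]$ are mutually inverse on the two descriptions of the lattice ideal, and that the saturation of $I$ is exactly the ideal of $R$ to which $I\,K[X^{\pm}]$ contracts.

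For the implication $I = I_{+}(\rho) \Rightarrow (I : \langle x_1,\ldots,x_n\rangle^{\infty}) = I(\rho)$, I would first check that $I_{+}(\rho)$ is saturated. Since $I_{+}(\rho) = I(\rho)\cap R$ and every $x_i$ is a unit in $K[X^{\pm}]$, multiplying by a power of $x_i$ neither moves an element into nor out of $I(\rho)$, so $(I_{+}(\rho) : \langle x_1,\ldots,x_n\rangle^{\infty}) = I_{+}(\rho)$. Hence if $I = I_{+}(\rho)$ its saturation is $I_{+}(\rho)$ itself, and extending to $K[X^{\pm}]$ by the second identity returns $I(\rho)$, giving the claimed equality.

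For the converse I would contract the hypothesis $(I : \langle x_1,\ldots,x_n\rangle^{\infty}) = I(\rho)$ back to $R$, using $I(\rho)\cap R = I_{+}(\rho)$ and the fact that the saturation is its own contraction-of-extension, to obtain $(I : \langle x_1,\ldots,x_n\rangle^{\infty}) = I_{+}(\rho)$. Proposition 2.3 then identifies $\rho$ with the unique partial character attached to $I$, and under the standing hypotheses (binomial, monomial-free, and agreeing with its saturation) I may replace the saturation by $I$ and conclude $I = I_{+}(\rho)$.

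The step I expect to be the main obstacle is precisely this last replacement. Passing from $(I : \langle x_1,\ldots,x_n\rangle^{\infty}) = I_{+}(\rho)$ to $I = I_{+}(\rho)$ is not automatic for an arbitrary binomial ideal: for example $I = \langle x_1^{2}-x_1 x_2\rangle$ saturates to $\langle x_1 - x_2\rangle = I_{+}(\rho)$ without being equal to it. I would therefore make the saturation hypothesis on $I$ explicit, and on saturated $I$ the bijection of the first paragraph renders the equivalence immediate; the remainder is the routine bookkeeping of passing between $R$ and $K[X^{\pm}]$.
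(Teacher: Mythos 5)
You have essentially found a flaw in the paper rather than failed to reproduce its argument: the paper offers no proof of this corollary at all (it is presented as an immediate consequence of Proposition~2.2, the Eisenbud--Sturmfels result), and the obstacle you isolate in your last paragraph is a genuine defect of the statement, not of your reasoning. Your forward direction is correct: $I_{+}(\rho)=I(\rho)\cap R$ is a contraction from $K[X^{\pm}]$, hence saturated, and $I_{+}(\rho)K[X^{\pm}]=I(\rho)$ because the Laurent generators differ from the polynomial generators by unit monomials. Your counterexample to the reverse direction is also valid: $I=\langle x_{1}^{2}-x_{1}x_{2}\rangle$ is binomial, contains no monomial, and its saturation (equivalently, the contraction of $I\,K[X^{\pm}]$) is $\langle x_{1}-x_{2}\rangle=I_{+}(\rho)$ for the trivial character on the lattice $\mathbb{Z}(1,-1)$, yet $I\neq I_{+}(\rho)$ for degree reasons. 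So under the only reading of the displayed equality that typechecks (namely $I\,K[X^{\pm}]=I(\rho)$, or equivalently $(I:\langle x_{1},\ldots,x_{n}\rangle^{\infty})=I(\rho)\cap R$), the ``only if'' direction is false, and your proposed repair --- adding the hypothesis $(I:\langle x_{1},\ldots,x_{n}\rangle^{\infty})=I$ --- is exactly what is needed. The paper's own gloss after the corollary makes the defect vivid: it asserts that $I$ is a lattice ideal iff there exists $\rho$ with $(I:\langle x_{1},\ldots,x_{n}\rangle^{\infty})=I_{+}(\rho)$, but by Proposition~2.2 the latter condition holds for \emph{every} monomial-free binomial ideal. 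The statement the paper actually needs, and uses, is its Corollary~2.4: a monomial-free binomial ideal is a lattice ideal iff it equals its own saturation, which follows at once from Proposition~2.2 together with your observation that lattice ideals are saturated.

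Two smaller points. First, your localization formula $(I:\langle x_{1},\ldots,x_{n}\rangle^{\infty})=I\,K[X^{\pm}]\cap R$, and with it your counterexample, presumes saturation with respect to the \emph{product} $x_{1}\cdots x_{n}$ (the Eisenbud--Sturmfels convention), whereas the paper literally writes powers of the ideal generated by the variables. Under that literal reading your particular $I$ is in fact saturated (it has no $\langle x_{1},x_{2}\rangle$-primary component), but the corollary is still false: take instead $I=\langle x_{1}^{2}-x_{1}x_{2},\ x_{1}x_{2}-x_{2}^{2}\rangle=\langle x_{1},x_{2}\rangle\langle x_{1}-x_{2}\rangle$, which is binomial and monomial-free, saturates to $\langle x_{1}-x_{2}\rangle$ even with respect to the maximal ideal, and differs from it. In any case the product reading is clearly the intended one, since the paper's proof of Theorem~3.1 passes from $y_{k}(x_{i}y_{j}-x_{j}y_{i})\in J_{G}$ to membership in the colon ideal, which requires saturating by individual variables. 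Second, your reference to ``Proposition~2.3'' should be to Proposition~2.2; its uniqueness assertion is indeed what pins down $\rho$ in your final step.
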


In the other words, an ideal $I$ is a lattice ideal if and only if there exist a partial character $\rho$ such that $(I : \langle x_{1},\cdots,x_{n}\rangle
^{\infty}) = I_{+}(\rho)$

\begin{corollary}
The binomial edge ideal $J‎_{G}‎$ is a lattice ideal if and only if $(J‎_{G} : \langle X,Y\rangle
^{\infty}) = J‎_{G}$

\end{corollary}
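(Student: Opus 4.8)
The plan is to deduce the statement from the Eisenbud--Sturmfels dichotomy already recorded in Proposition 2.1 and Corollary 2.2, so that the only point genuinely special to $J_{G}$ that I must check by hand is the hypothesis of those results: namely, that $J_{G}$ is a binomial ideal containing no monomial. Binomiality is immediate, since by definition $J_{G}$ is generated by the binomials $f_{ij}=x_{i}y_{j}-x_{j}y_{i}$. For the absence of monomials I would argue by evaluation: each generator $f_{ij}$ vanishes at the point at which every variable is set equal to $1$, so every element of $J_{G}$ vanishes there as well; since any monomial takes the value $1$ at that point, no monomial can belong to $J_{G}$ (this uses only $1\neq 0$ in $K$, so it is insensitive to the characteristic). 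Consequently Proposition 2.1 supplies a unique partial character $\rho$ with $(J_{G}:\langle X,Y\rangle^{\infty})=I_{+}(\rho)$.

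With this preparation the two implications are short. For the forward direction, assume $J_{G}$ is a lattice ideal, so $J_{G}=I_{+}(\rho)$ by definition. The conceptual reason the saturation collapses is that $I_{+}(\rho)=I(\rho)\cap R$ and every variable is invertible in the ambient Laurent ring, so a contraction of this form is already saturated with respect to $\langle X,Y\rangle$; formally I would invoke Corollary 2.2 with $I=J_{G}$, which gives $(J_{G}:\langle X,Y\rangle^{\infty})=I_{+}(\rho)=J_{G}$. For the converse, assume $(J_{G}:\langle X,Y\rangle^{\infty})=J_{G}$; Proposition 2.1 identifies the left-hand side as $I_{+}(\rho)$, whence $J_{G}=I_{+}(\rho)$ and $J_{G}$ is a lattice ideal by definition.

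I expect the substantive step to be the no-monomial verification, because it is exactly what licenses the application of Proposition 2.1 and Corollary 2.2; everything after that is a formal manipulation of the colon ideal. A secondary point to handle carefully is the passage from the single-variable saturation $(I_{+}(\rho):x^{\infty})=I_{+}(\rho)$ to the full saturation against $\langle X,Y\rangle$, but this is routine once one observes that inverting the variables one at a time does not enlarge $I(\rho)\cap R$.
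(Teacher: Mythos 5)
Your proof is correct and follows essentially the same route as the paper: the paper states this corollary without proof, as an immediate consequence of the Eisenbud--Sturmfels results (Proposition 2.2 and Corollary 2.3 in the paper's numbering), which is exactly the reduction you carry out --- the forward direction from the saturation identity for lattice ideals, the converse from the unique partial character supplied by the proposition. Your explicit check that $J_{G}$ is a binomial ideal containing no monomial (evaluation at the all-ones point) is precisely the hypothesis those results require and which the paper leaves implicit, so it is a welcome addition rather than a deviation.
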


 \section{\textbf{Main results}}

\begin{theorem}
The following conditions are equivalent:

i) ‎$ G $ is complete around all vertices of‎ $  G $.‎‎

ii) The ideal $J‎_{G}‎$ is a lattice ideal.
\end{theorem}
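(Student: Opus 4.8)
The plan is to prove the two implications separately, in each case reducing to the characterization furnished by the previous corollary: $J_G$ is a lattice ideal exactly when it equals its saturation at the variables, equivalently when every variable $x_i,y_i$ is a non-zerodivisor modulo $J_G$.

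For (i) $\Rightarrow$ (ii) I would invoke the equivalence recalled in the introduction (Ohtani): if $G$ is complete around all of its vertices, then each connected component $G_\ell$ of $G$ is a complete graph, say on $n_\ell$ vertices. By the discussion in Section 2 one has $J_{G_\ell}=J_{K_{n_\ell}}=I_{K_{2,n_\ell}}$, the toric ideal attached to the complete bipartite graph $K_{2,n_\ell}$; in particular each $J_{G_\ell}$ is a prime lattice ideal. Since distinct components involve pairwise disjoint sets of variables, Proposition 2.1(e) gives $J_G=\langle J_{G_1},\dots,J_{G_r}\rangle=\sum_\ell I_{K_{2,n_\ell}}$, and a sum of lattice ideals supported on disjoint variables is again a lattice ideal, its defining lattice being the direct sum $L=\bigoplus_\ell L_{\rho_\ell}$ and its character the product of the $\rho_\ell$. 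This simultaneously recovers the form announced in the abstract. Alternatively, and more directly in the language of the corollary, $J_G$ is prime by Ohtani's result and contains no variable (it is generated in degree $2$), so saturating it at the variables returns $J_G$ and it is a lattice ideal.

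For (ii) $\Rightarrow$ (i) I would argue by contraposition. If $G$ is not complete around all vertices, then by definition there is a vertex $j$ and two neighbours $i,k\in N(j)$ with $\{i,k\}\notin E(G)$. Both $f_{ij}$ and $f_{jk}$ lie in $J_G$, and the two $2\times 2$-minor (Pl\"ucker-type) identities
$$x_j f_{ik}=x_i f_{jk}+x_k f_{ij},\qquad y_j f_{ik}=y_i f_{jk}+y_k f_{ij}$$
show that $x_j f_{ik}$ and $y_j f_{ik}$ belong to $J_G$. It then suffices to check that $f_{ik}\notin J_G$: granted this, $x_j$ is a zero-divisor modulo $J_G$, so $(J_G:\langle X,Y\rangle^\infty)\neq J_G$ and $J_G$ fails to be a lattice ideal by the corollary.

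The one step that requires a genuine (if short) argument is the claim $f_{ik}\notin J_G$, and this is where I expect the main subtlety to lie. I would use that $J_G$ is generated in degree $2$, so its degree-$2$ graded component is precisely the $K$-span of the binomials $f_{ab}$ with $\{a,b\}\in E(G)$; these are $K$-linearly independent because distinct edges contribute disjoint pairs of monomials $x_a y_b,\,x_b y_a$. Since $\{i,k\}\notin E(G)$, the monomials $x_i y_k$ and $x_k y_i$ occur in no generator, so comparing coefficients shows $f_{ik}$ cannot lie in that span, whence $f_{ik}\notin J_G$. The point to state carefully is the exact saturation meant in the corollary: the witness $x_j f_{ik}\in J_G$ with $f_{ik}\notin J_G$ is exactly what makes $x_j$ a zero-divisor and pushes $f_{ik}$ into the saturation taken with respect to the product of all the variables, which is the saturation relevant to lattice ideals (and not, for instance, the saturation at the irrelevant maximal ideal, which need not detect this).
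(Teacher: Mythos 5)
Your proof is correct and takes essentially the same route as the paper: both directions rest on the saturation criterion of Corollary 2.4, with a syzygy among the $2\times 2$ minors (your $x_jf_{ik}=x_if_{jk}+x_kf_{ij}$ is, up to relabeling and swapping the roles of the $x$'s and $y$'s, the paper's identity $y_k(y_ix_j-y_jx_i)=y_j(x_ky_i-x_iy_k)-y_i(x_ky_j-x_jy_k)$) handling the ``lattice implies locally complete'' direction, and the decomposition of $J_G$ into the toric ideals $I_{K_{2,n_i}}$ of the complete components handling the converse. If anything, your write-up is more careful than the paper's: you justify the step $f_{ik}\notin J_G$ by the degree-two linear-independence argument (the paper silently passes from $f_{ij}\in J_G$ to $\{i,j\}\in E(G)$), you make explicit the disjoint-variables hypothesis without which a sum of lattice ideals need not be a lattice ideal, and you correctly identify the saturation at the product of all variables as the one relevant to Eisenbud--Sturmfels --- all points the paper leaves implicit.
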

\begin{proof}
Let $J_{G}$ be a lattice ideal then by corollary 2.4 $(J_{G} : \langle X,Y \rangle
^{\infty})= J_{G}$. Now let $k\in V(G)$ and ‎$‎‎ i,j‎‎ \in N(k) $‎ so ‎$ ‎‎\lbrace‎i,k‎\rbrace‎ , ‎\lbrace‎j,k‎\rbrace‎‎ \in E(G)‎‎ $‎. We have to show that ‎$ ‎\lbrace‎i,j‎\rbrace‎ \in E(G) $‎ and for this we see that  $x_{i}y_{j}-x_{j}y_{i} \in J_{G}$. since  $x_{i}y_{k}-x_{k}y_{i} , x_{j}y_{k}-x_{k}y_{j} \in J_{G}$ and $y_{k}(y_{i}x_{j}-y_{j}x_{i})=y_{j}(x_{k}y_{i}-x_{i}y_{k}) -
y_{i}(x_{k}y_{j}-x_{j}y_{k})$ so ‎$ x_{i}y_{j}-x_{j}y_{i} \in (J_{G} : \langle X,Y \rangle
^{\infty})  $ and ‎$  x_{i}y_{j}-x_{j}y_{i} \in  J_{G} $‎ ‎ this implies that ‎$ N(K) $‎ is complete for each ‎$ k \in V(G) $‎. Conversely,
in a simple connected graph $G$, the ideal $J‎_{G} $ is a  lattice ideal [1] and the sum of two lattice ideal is again a lattice[2] so the proof is completed  by proposition 2.1.
\end{proof}

\begin{theorem}
Let ‎$ G‎_{1} , \cdots , G‎_{r}‎‎ $‎ be the connected components of $G$ and $‎\vert V(G‎_{i})‎\vert =n‎_{i}‎‎‎‎$ then the following conditions are equivalent:

i) $J‎_{G}‎$ is a lattice ideal.

ii) $J‎_{G}‎= ‎\left\langle  J‎_{G‎_{1}‎}, \cdots , J‎_{G‎_{r}‎}‎\right\rangle  = ‎\sum I‎_{K‎_{2,n‎_{i}‎}‎}‎‎‎‎‎‎$  and ‎$ J‎_{G‎_{i}‎}=I‎_{K‎_{2,n‎_{i}‎}‎}‎‎ $‎
\end{theorem}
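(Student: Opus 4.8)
The plan is to treat this statement as a repackaging of \textbf{Theorem 3.1} together with the structural facts already assembled, rather than as a genuinely new argument. The key observation is that Proposition 2.1(e) gives the decomposition $J_G = \langle J_{G_1},\dots,J_{G_r}\rangle$ for \emph{every} graph $G$, so the only real content hidden in condition (ii) is the pair of identities $J_{G_i}=I_{K_{2,n_i}}$ and, summing over $i$, $\sum J_{G_i}=\sum I_{K_{2,n_i}}$. I would therefore organize the proof as two implications, with the left-to-right direction carrying the substance and the right-to-left direction being essentially formal.

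For (i) $\Rightarrow$ (ii), I would start from the hypothesis that $J_G$ is a lattice ideal and apply \textbf{Theorem 3.1} to conclude that $G$ is complete around all of its vertices. Invoking the equivalence recorded in the introduction (Ohtani [7], conditions (i) $\Leftrightarrow$ (ii) there), this forces every connected component $G_i$ to be a complete graph on its $n_i$ vertices. Hence $J_{G_i}=J_{K_{n_i}}$, and the computation carried out in the Preliminaries, namely $J_{K_{n_i}}=I_{K_{2,n_i}}$ (up to the relabeling that identifies $V(G_i)$ with $[n_i]$), gives $J_{G_i}=I_{K_{2,n_i}}$. Summing these and reusing Proposition 2.1(e) yields $J_G=\langle J_{G_1},\dots,J_{G_r}\rangle=\sum I_{K_{2,n_i}}$, which is exactly (ii).

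For (ii) $\Rightarrow$ (i), I would not even need \textbf{Theorem 3.1}. Each $I_{K_{2,n_i}}$ is by construction a toric ideal, hence prime, and every toric (prime binomial) ideal is a lattice ideal. Because distinct components $G_i$ are pairwise vertex-disjoint, the ideals $J_{G_i}=I_{K_{2,n_i}}$ involve pairwise disjoint sets of variables; forming the direct sum of the corresponding sublattices $L_{\rho_i}$ together with the induced partial character produces a single partial character $\rho$ with $I_{+}(\rho)=\sum J_{G_i}$, so that the sum is again a lattice ideal, as cited from Eisenbud and Sturmfels [2]. Therefore $J_G=\sum I_{K_{2,n_i}}$ is a lattice ideal.

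The step I expect to require the most care is the assembly of the partial characters in the converse direction: one must check that the lattice $L_\rho$ really is the (internal) direct sum of the $L_{\rho_i}$ and that no relation forces a conflict among the scalar values $\rho_i(m)\in K^{\ast}$. This is clean precisely because components share no vertex, so a binomial in $J_{G_i}$ only ever involves the variables indexed by $V(G_i)$ and the characters cannot interfere. Everything else reduces to quoting Proposition 2.1(e), the identification $J_{K_{n_i}}=I_{K_{2,n_i}}$, and the equivalence from [7], so in effect the theorem is a corollary of \textbf{Theorem 3.1} sharpened by the explicit toric description of the complete components.
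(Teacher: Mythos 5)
Your proposal is correct, and your forward implication (i) $\Rightarrow$ (ii) coincides with the paper's: both apply Theorem 3.1, pass to complete components via the equivalence quoted from [7], and finish with the identity $J_{K_{n_i}}=I_{K_{2,n_i}}$ from the Preliminaries together with Proposition 2.1(e). Where you genuinely diverge is the converse. The paper proves (ii) $\Rightarrow$ (i) by going back to the graph: from $J_{G_i}=I_{K_{2,n_i}}=J_{K_{n_i}}$ it concludes (implicitly via Proposition 2.1(b)) that every $G_i$ is complete, hence $G$ is complete around all its vertices, and then it cites Theorem 3.1 once more. You bypass Theorem 3.1 entirely and instead prove directly that a sum of toric ideals in pairwise disjoint sets of variables is a lattice ideal, by assembling the sublattices $L_{\rho_i}$ into a direct sum $L_{\rho}$ and the characters into a single partial character $\rho$. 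The only detail left to write out is the identity, for $m=m_1+m_2$ with disjoint supports,
$$x^{m_+}-\rho(m)x^{m_-}=x^{m_{2+}}\bigl(x^{m_{1+}}-\rho_1(m_1)x^{m_{1-}}\bigr)+\rho_1(m_1)x^{m_{1-}}\bigl(x^{m_{2+}}-\rho_2(m_2)x^{m_{2-}}\bigr),$$
which, by induction on $r$, gives $I_{+}(\rho)=\sum_i I_{+}(\rho_i)$. This buys you something real: the paper's converse rests, through the proof of Theorem 3.1, on the unqualified claim that the sum of two lattice ideals is again a lattice ideal (cited to [2]), which is false in that generality --- for instance, when $\operatorname{char}K\neq 2$, $\langle x-y\rangle+\langle x+y\rangle=\langle x,y\rangle$ contains monomials, and lattice ideals never contain monomials. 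Your argument isolates exactly the disjoint-variables hypothesis that makes the sum statement true, so your version of (ii) $\Rightarrow$ (i) is both independent of Theorem 3.1 and more rigorous; the paper's route is shorter only if one accepts Theorem 3.1 as stated.
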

\begin{proof}
Let  $J‎_{G}‎$ be a lattice ideal so by proposition 3.1 ‎$ G $ is complete around all vertices of‎ $  G $. ‎‎In other word each connected component of ‎$ G $ is complete. This part of proof is completed by proposition 2.1. Conversely, let  $J‎_{G}‎= ‎\left\langle  J‎_{G‎_{1}‎}, \cdots , J‎_{G‎_{r}‎}‎\right\rangle $ and ‎$ J‎_{G‎_{i}‎}=I‎_{K‎_{2,n‎_{i}‎}‎}‎‎ $‎ that means each connected component of $  G $ is complete. So 
‎$ G $ is complete around all vertices of‎ $  G $ and by proposition 3.1, $J‎_{G}‎$ is lattice.
\end{proof} 

\begin{corollary}
The following conditions are equivalent:

i) ‎$ G $ is complete around all vertices of‎ $  G $.‎‎

ii) All connected components of $G$ are complete.

iii) $J‎_{G}‎$ is a prime ideal.

iv) $J‎_{G}‎$ is a lattice ideal.

v) $J‎_{G}‎$ is a toric ideal as a sum of toric ideals associated to some bipartite complete graphs.
\end{corollary}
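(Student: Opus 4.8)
The plan is to derive the corollary as a pure synthesis of equivalences already in hand, since every required link has been established either in the introduction or in the two preceding theorems. First I would take the block (i) $\Leftrightarrow$ (ii) $\Leftrightarrow$ (iii) as given: this is exactly Ohtani's result recalled at the start of the paper [7], stating that $G$ is complete around all its vertices iff each connected component is complete iff $J_G$ is prime. No new argument is needed for this block, so it can be cited directly.

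Next I would connect the combinatorial conditions to the lattice-ideal condition. This is precisely Theorem 3.1, which asserts that $G$ is complete around all vertices if and only if $J_G$ is a lattice ideal, yielding (i) $\Leftrightarrow$ (iv). I would then invoke Theorem 3.2 for the last link: $J_G$ is a lattice ideal if and only if it admits the decomposition $J_G = \langle J_{G_1}, \ldots, J_{G_r}\rangle = \sum I_{K_{2,n_i}}$ with $J_{G_i} = I_{K_{2,n_i}}$, that is, if and only if $J_G$ is a sum of toric ideals of complete bipartite graphs, giving (iv) $\Leftrightarrow$ (v). That the displayed sum is itself a toric ideal is legitimate because the connected components $G_i$ occupy disjoint vertex sets, so the summands involve disjoint variables; this structural fact is already carried by Theorem 3.2.

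Chaining these three blocks closes the cycle among all five statements. The only point requiring care is bookkeeping: one must verify that the ``lattice ideal'' hypothesis in Theorem 3.1 and the ``lattice ideal'' hypothesis opening Theorem 3.2 denote the identical condition (iv), so that the two theorems compose without a gap. Once this identification is noted, there is no genuine obstacle --- the corollary contributes no new mathematical content beyond collating Theorems 3.1 and 3.2 with the cited equivalence of (i)--(iii).
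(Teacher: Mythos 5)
Your proposal is correct and follows exactly the route the paper intends: the corollary is stated without proof precisely because it is the collation of Ohtani's equivalence (i)$\Leftrightarrow$(ii)$\Leftrightarrow$(iii) recalled in the introduction with Theorem 3.1 for (i)$\Leftrightarrow$(iv) and Theorem 3.2 for (iv)$\Leftrightarrow$(v). Your bookkeeping remark about the two uses of ``lattice ideal'' and the disjoint-variable observation for the sum of toric ideals are sound and add nothing contrary to the paper's reading.
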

%---------------------------------------------------------------------------------------%

\section*{\textbf{Acknowledgement}}

The authors want to thank professor Jurgen Herzog for his useful
advice.

%---------------------------------------------------------------------------------------%

\end{document}